\newcommand{\newsection}[1]{\setcounter{equation}{0} \section{#1}}
\theoremstyle{plain}
\newtheorem{propn}{Proposition}[section]
\newtheorem{thm}[propn]{Theorem}
\newtheorem{lemma}[propn]{Lemma}
\newtheorem*{thm*}{Theorem}
\theoremstyle{definition}
\theoremstyle{remark}
\newtheorem*{rem}{Remark}
\newcommand{\q}{\mathcal{Q}}
\newcommand{\s}{\mathcal{S}}
\newcommand{\clb}{\mathcal{B}}
\newcommand{\cle}{\mathcal{E}}
\newcommand{\clh}{\mathcal{H}}
\newcommand{\clp}{\mathcal{P}}
\newcommand{\cls}{\mathcal{S}}
\newcommand{\z}{\bm{z}}
\newcommand{\w}{\bm{w}}
 \newcommand{\D}{\mathbb{D}}
\newcommand{\N}{\mathbb{N}}
\newcommand{\ot}{\otimes}
\newcommand{\vp}{\varphi}
\newcommand{\bt}{\begin{Theorem}}
\def\beginlem{\begin{Lemma}}
\def\beginprop{\begin{Proposition}}
\def\begincor{\begin{Corollary}}
\def\begindef{\begin{Definition}}
\def\beginexamp{\begin{Example}}
\def\beginrem{\begin{Remark}}
\def\beginq{\begin{Question}}
\def\beginass{\begin{ass}}
\def\beginnote{\begin{Note}}
\newcommand{\et}{\end{Theorem}}
\def\endlem{\end{Lemma}}
\def\endprop{\end{Proposition}}
\def\endcor{\end{Corollary}}
\def\enddef{\end{Definition}}
\def\endexamp{\end{Example}}
\def\endq{\end{Question}}
\def\endass{\end{ass}}
\def\endnote{\end{Note}}
\DeclareMathOperator{\Ran}{Ran}
\begin{document}

%\today

\title{Inner multipliers and Rudin type invariant subspaces}

\author[Chattopadhyay] {Arup Chattopadhyay}
\address{%\vskip10pt
(A. Chattopadhyay) Indian Statistical Institute \\ Statistics
andMathematics Unit \\ 8th Mile, Mysore Road \\ Bangalore \\ 560059
\\ India}

\email{2003arupchattopadhyay@gmail.com, arup@isibang.ac.in}

\author[Das] {B. Krishna Das}

\address{%\vskip10pt
(B. K. Das) Indian Statistical Institute \\ Statistics and
Mathematics Unit \\ 8th Mile, Mysore Road \\ Bangalore \\ 560059 \\
India} \email{dasb@isibang.ac.in, bata436@gmail.com}

\author[Sarkar]{Jaydeb Sarkar}

\address{%\vskip10pt
(J. Sarkar) Indian Statistical Institute \\ Statistics and
Mathematics Unit \\ 8th Mile, Mysore Road \\ Bangalore \\ 560059
\\ India}

\email{jay@isibang.ac.in, jaydeb@gmail.com}

\subjclass[2010]{47A13, 47A15, 46E20, 46M05} \keywords{Hardy space,
inner sequence, operator-valued inner function, invariant
subspace, unitary equivalence}

\begin{abstract}
Let $\mathcal{E}$ be a Hilbert space and
$H^2_{\mathcal{E}}(\mathbb{D})$ be the $\cle$-valued Hardy space
over the unit disc $\mathbb{D}$ in $\mathbb{C}$. The well known
Beurling-Lax-Halmos theorem states that every shift invariant
subspace of $H^2_{\cle}(\D)$ other than $\{0\}$ has the form $\Theta
H^2_{\cle_*}(\D)$, where $\Theta$ is an operator-valued inner
multiplier in $H^\infty_{B(\cle_*, \mathcal{E})}(\mathbb{D})$ for
some Hilbert space $\cle_*$. In this paper we identify
$H^2(\mathbb{D}^n)$ with $H^2(\mathbb{D}^{n-1})$-valued Hardy space
$H^2_{H^2(\mathbb{D}^{n-1})}(\mathbb{D})$ and classify all such
inner multiplier $\Theta \in
H^\infty_{\mathcal{B}(H^2(\mathbb{D}^{n-1}))}(\mathbb{D})$ for which
$\Theta  H^2_{H^2(\mathbb{D}^{n-1})}(\mathbb{D})$ is a Rudin type
invariant subspace of $H^2(\mathbb{D}^n)$.
\end{abstract}

\maketitle

\section*{Notation}

\begin{list}{\quad}{}
\item $\cle$ \quad \quad \; Separable Hilbert space.
\item $\mathbb{N}$ \quad \quad \; Set of all natural numbers including $0$.

%\item $n$ \quad \quad \; Natural number $n \geq 2$, unless
%specifically stated otherwise.
\item $\mathbb{N}^n$ \quad \quad $\{\bm{k} = (k_1, \ldots, k_n) : k_i \in \mathbb{N}, i = 1,\ldots, n\}$.
\item $\mathbb{C}^n$  \quad \quad Complex $n$-space.
\item $\bm{z}$ \quad \quad \; $(z_1, \ldots, z_n) \in \mathbb{C}^n$.
\item $\bm{z}^{\bm{k}}$ \quad \quad \,$z_1^{k_1}\cdots
z_n^{k_n}$.
%\item $T$ \quad \quad \; $n$-tuple of commuting operators $(T_1, \ldots, T_n)$.
%\item $T^{\bm{k}}$ \quad \quad $T_1^{k_1} \cdots
%T_n^{k_n}$.
%\item $\mathbb{C}[\z]$  \quad \, $\mathbb{C}[z_1, \ldots, z_n]$, the polynomial ring over
%$\mathbb{C}$ in $n$ commuting
%variables.
\item $\mathbb{D}^n$  \quad \quad Open unit polydisc $\{\z : |z_i|
<1, i=1,\dots,n\}$.
\item $\mathbb{T}^n$  \quad \quad $\{\z : |z_i|
= 1, i=1,\dots,n\}$- Distinguished boundary of $\mathbb{D}^n$.
\end{list}

Throughout this article, we denote by $\mathcal{B}(\cle_*, \cle)$
the space of all bounded linear operators from $\cle_*$ to $\cle$
and simply write $\clb(\cle)$ when $\cle=\cle_*$. For a closed
subspace $\mathcal{S}$ of a Hilbert space $\clh$,  $P_{\mathcal{S}}$
denotes the orthogonal projection onto $\mathcal{S}$.

\newsection{Introduction}\label{sec:1}

The $\cle$-valued Hardy space over $\D^n$ is denoted by $H^2_{\cle}(\D^n)$ and
defined by
\[
H^2_{\cle}(\D^n):= \{ f(\z) = \sum_{\bm{k} \in \N^n} \z^{\bm{k}}
\eta_{\bm{k}}: ~~~\|f\|^2 := \sum_{\bm{k} \in \N^n}
\|\eta_{\bm{k}}\|^2_{\cle} < \infty,~~~\z\in \D^n\}.
\]
A closed subspace $\mathcal{S}\subseteq H^2_{\cle}(\D^n)$ is said to
be a \emph{shift invariant subspace}, or simply an
\textit{invariant subspace}, of  $H^2_{\cle}(\D^n)$ if $\cls$ is
invariant under the shift operators $\{M_{z_1}, \ldots, M_{z_n}\}$,
that is, if $M_{z_i}\mathcal{S}\subseteq \mathcal{S}$ for all $i =
1, \ldots, n$. Here the tuple of \textit{shift} operators $\{M_{z_1},
\ldots, M_{z_n}\}$ on $H^2_{\cle}(\D^n)$ is defined by
\[\left(M_{z_i} f\right)(\w) = w_i f(\w),
\quad \quad (f\in H^2_{\cle}(\D^n), \w\in \D^n)\]for all $i = 1,
\ldots, n$. The Banach space of all $\clb(\cle_*, \cle)$-valued
bounded analytic functions on $\D^n$ is denoted by
$H^\infty_{\clb(\cle_*, \cle)}(\D^n)$. Each $\Theta \in
H^\infty_{\clb(\cle_*, \cle)}(\D^n)$ induces a bounded linear map
$M_{\Theta} \in \clb(H^2_{\cle_*}(\D^n), H^2_{\cle}(\D^n))$ defined
by \[(M_{\Theta} f)(\w) = \Theta(\w) f(\w). \quad \quad (f \in
H^2_{\cle_*}(\D^n), \w \in \D^n)\]The elements of
$H^\infty_{\clb(\cle_*, \cle)}(\D^n)$ are called the
\textit{multipliers} and are determined by \[\Theta \in
H^\infty_{\clb(\cle_*, \cle)}(\D^n) \Leftrightarrow M_{z_i}
M_{\Theta} = M_{\Theta} M_{z_i}, \quad \quad \forall i = 1, \ldots,
n\] where the shift $M_{z_i}$ on the left hand side and the right
hand side act on $H^2_{\cle}(\D^n)$ and $H^2_{\cle_*}(\D^n)$
respectively. A multiplier $\Theta \in H^\infty_{\clb(\cle_*,
\cle)}(\D^n)$ is said to be \textit{inner} if $M_{\Theta}$ is an
isometry, or equivalently, $\Theta(\z) \in \clb(\cle_*,\cle)$ is an
isometry almost everywhere with respect to the Lebesgue measure on
$\mathbb{T}^n$.

Inner multipliers are among the most important tools for classifying
invariant subspaces of reproducing kernel Hilbert spaces. For
instance:

\begin{thm}\textsf{(Beurling-Lax-Halmos \cite{NF})}
A non-zero closed subspace $\mathcal{S}\subseteq H^2_{\cle}(\D)$ is
shift invariant if and only if there exists an inner multiplier
$\Theta \in H^\infty_{\clb(\cle_*,\cle)}(\D)$ such that \[\cls =
\Theta H^2_{\cle_*}(\D),\] for some Hilbert space $\cle_*$.
\end{thm}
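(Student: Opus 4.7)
The plan is to prove both implications, with the converse essentially formal and the forward direction proceeding via a Wold decomposition of the compressed shift.

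\textbf{Sufficiency.} Suppose $\cls = \Theta H^2_{\cle_*}(\D)$ with $\Theta$ inner. Since $M_\Theta$ is an isometry, its range $\cls$ is automatically closed in $H^2_\cle(\D)$. The multiplier identity $M_{z_1} M_\Theta = M_\Theta M_{z_1}$ then yields $M_{z_1} \cls = M_\Theta(M_{z_1} H^2_{\cle_*}(\D)) \subseteq M_\Theta H^2_{\cle_*}(\D) = \cls$, so $\cls$ is shift invariant.

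\textbf{Necessity.} Given a non-zero shift invariant $\cls$, I would first pass to the restriction $V := M_{z_1}|_{\cls}$, which is an isometry on $\cls$. The crucial observation is that $V$ is \emph{pure}: since $M_{z_1}$ is a pure isometry on $H^2_\cle(\D)$, one has $\bigcap_{n \geq 0} V^n \cls \subseteq \bigcap_{n \geq 0} M_{z_1}^n H^2_\cle(\D) = \{0\}$. The Wold decomposition applied to $V$ then produces the orthogonal sum $\cls = \bigoplus_{n \geq 0} V^n \cle_*$, where the wandering subspace is $\cle_* := \cls \ominus V\cls$.

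Next I would construct the candidate multiplier. Define $\Pi : H^2_{\cle_*}(\D) \to H^2_\cle(\D)$ on vectors $z_1^n \eta$ with $\eta \in \cle_*$ and $n \geq 0$ by $\Pi(z_1^n \eta) := V^n \eta$ (viewing $\eta \in \cle_* \subseteq \cls \subseteq H^2_\cle(\D)$), and extend by linearity and continuity. Matching the Wold decomposition of $\cls$ against the canonical decomposition $H^2_{\cle_*}(\D) = \bigoplus_{n \geq 0} z_1^n \cle_*$ shows that $\Pi$ is a well-defined isometry with $\Ran \Pi = \cls$, and by construction $\Pi M_{z_1} = M_{z_1} \Pi$. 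I would then invoke the standard multiplier characterization of shift-intertwiners: every bounded operator $\Pi \in \clb(H^2_{\cle_*}(\D), H^2_\cle(\D))$ commuting with the shifts has the form $\Pi = M_\Theta$ for a unique $\Theta \in H^\infty_{\clb(\cle_*,\cle)}(\D)$, recovered via $\Theta(z_1)\eta = \sum_{n \geq 0} z_1^n\, P_\cle M_{z_1}^{*n} \Pi \eta$ for $\eta \in \cle_*$. Since $M_\Theta = \Pi$ is an isometry, $\Theta$ is inner and $\Theta H^2_{\cle_*}(\D) = \Ran \Pi = \cls$.

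The main obstacle is precisely the final step: showing that the formula above yields a function in $H^\infty_{\clb(\cle_*,\cle)}(\D)$ with $\|\Theta\|_\infty \leq \|\Pi\|$, and that the resulting $M_\Theta$ agrees with $\Pi$ on the total set $\{z_1^n \eta : n \geq 0,\ \eta \in \cle_*\}$. The remaining ingredients—purity of $V$, the Wold decomposition, and the closedness of isometric ranges—are entirely standard and present no real difficulty.
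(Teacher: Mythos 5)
Your argument is correct: the sufficiency direction is immediate from the fact that an isometry has closed range, and your necessity argument is the classical Halmos proof --- purity of $V=M_z|_{\cls}$ inherited from the pure shift, the Wold decomposition $\cls=\bigoplus_{n\geq 0}V^n\cle_*$ with $\cle_*=\cls\ominus V\cls$, and the identification of the resulting shift-intertwiner $\Pi$ with a multiplication operator $M_\Theta$ (a characterization the paper itself quotes in its introduction, so invoking it is legitimate here). Note that the paper does not prove this theorem at all; it is stated as a classical result with a citation to Sz.-Nagy and Foias, and your proof is essentially the standard one found there, so there is no substantive divergence to report.
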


For the Hardy space $H^2(\D^n)$, $n \geq 2$, Beurling-Lax-Halmos
theorem and most of its corollaries turns out to be false in
general (see Rudin \cite{rudin}). In fact, it is shown in
~\cite{SSW} that Beurling-Lax-Halmos
theorem holds for an invariant subspace of $H^2(\D^n)$
 if and only if it is doubly commutating. Recall that
a closed shift-invariant subspace $\cls \subseteq H^2(\mathbb{D}^n)$
is said to be \textit{doubly commuting} if
\[
R_{z_i}R_{z_j}^* = R_{z_j}^*R_{z_i}, \quad \quad (1\leq i\neq j\leq n)
\]where
\[
R_{z_i} = M_{z_i}|_{\mathcal{S}}. \quad \quad (i = 1, \ldots, n)
\]
\begin{thm}[\cite{SSW}]
\label{ssw}
Let $\s \neq \{0\}$ be a closed shift-invariant subspace of $H^2(\D^n)$, $n \geq 2$. Then the following are equivalent.
\begin{itemize}
\item[(i)] $\s$ is a doubly commuting shift-invariant subspace.
\item[(ii)] $\s=\vp H^2(\D^n)$ for some inner function $\vp$ in $H^2(\D^n)$.
\end{itemize}
\end{thm}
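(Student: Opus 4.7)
The implication (ii) $\Rightarrow$ (i) should be quick: since $\varphi$ is inner, $M_\varphi$ is an isometry on $H^2(\D^n)$ with range $\s$, and it intertwines each coordinate shift $M_{z_i}$ on $H^2(\D^n)$ with the restriction $R_{z_i}$ on $\s$. A direct Fourier-monomial calculation shows $(M_{z_1},\ldots,M_{z_n})$ is itself doubly commuting on $H^2(\D^n)$, and this property transfers to $(R_{z_1},\ldots,R_{z_n})$ through the unitary conjugation by $M_\varphi$.

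For the substantive direction (i) $\Rightarrow$ (ii), the plan is to view $(R_{z_1},\ldots,R_{z_n})$ as a doubly commuting tuple of \emph{pure} isometries on $\s$ and then invoke the Slocinski-type Wold decomposition. Each $R_{z_i}=M_{z_i}|_{\s}$ is pure because $\bigcap_{k} R_{z_i}^{k}\s\subseteq\bigcap_{k} z_i^{k} H^2(\D^n)=\{0\}$, and double commutativity is exactly the hypothesis. The Slocinski decomposition then produces a unitary $U\colon\clw_{\s}\otimes H^2(\D^n)\to\s$ intertwining $I\otimes M_{z_i}$ with $R_{z_i}$, where
\[
\clw_{\s}\;:=\;\bigcap_{i=1}^{n}\ker R_{z_i}^{*}\;=\;\bigcap_{i=1}^{n}(\s\ominus M_{z_i}\s)
\]
is the common wandering subspace. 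Because $R_{z_i}^{k_i}\eta=z_i^{k_i}\eta$ in $H^2(\D^n)$, this unitary takes the concrete form $U(\eta\otimes\z^{\bm{k}})=\z^{\bm{k}}\eta$ on elementary tensors.

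The main---and, I expect, only subtle---step will be to show $\dim\clw_{\s}=1$. Computing $\langle\z^{\bm{k}}\eta_1,\z^{\bm{l}}\eta_2\rangle_{H^2(\D^n)}$ both as a boundary integral over $\mathbb{T}^n$ and, via isometricity of $U$, as $\langle\eta_1,\eta_2\rangle\,\delta_{\bm{k},\bm{l}}$, one obtains
\[
\int_{\mathbb{T}^n}\z^{\bm{k}-\bm{l}}\,\eta_1\,\overline{\eta_2}\,dm\;=\;\langle\eta_1,\eta_2\rangle\,\delta_{\bm{k},\bm{l}},\qquad \bm{k},\bm{l}\in\N^n,\ \eta_1,\eta_2\in\clw_{\s}.
\]
As $\bm{k}-\bm{l}$ sweeps all of $\Int^n$, every nonzero Fourier coefficient of $\eta_1\overline{\eta_2}\in L^1(\mathbb{T}^n)$ must vanish, so $\eta_1\overline{\eta_2}$ equals the constant $\langle\eta_1,\eta_2\rangle$ almost everywhere. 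Taking $\eta_1=\eta_2=\eta$ of unit norm yields $|\eta|=1$ a.e.\ on $\mathbb{T}^n$, forcing $\eta$ to be inner; taking $\eta_1\perp\eta_2$ of unit norm yields $\eta_1\overline{\eta_2}=0$ a.e., contradicting $|\eta_1\overline{\eta_2}|=1$ a.e.\ from the previous observation. Since $\s\neq\{0\}$ forces $\clw_{\s}\neq\{0\}$ through $U$, the conclusion is $\dim\clw_{\s}=1$.

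Fixing a unit vector $\varphi$ spanning $\clw_{\s}$, the previous paragraph makes $\varphi$ inner, and $U$ specializes to the multiplication map $p\mapsto p\varphi$ from $H^2(\D^n)$ onto $\s$. Thus $\s=\varphi H^2(\D^n)$, giving (ii).
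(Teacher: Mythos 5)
Your argument is correct, but note that the paper itself gives no proof of this statement: it is quoted as a known theorem from the reference \cite{SSW}, so there is nothing internal to compare against. What you have written is essentially the standard proof from that source: the restrictions $R_{z_i}=M_{z_i}|_{\s}$ form a doubly commuting tuple of pure isometries, the S{\l}oci\'nski--Wold decomposition identifies $\s$ with $\clw_{\s}\otimes H^2(\D^n)$ via $\eta\otimes\z^{\bm{k}}\mapsto\z^{\bm{k}}\eta$, and the Fourier-coefficient computation of $\langle\z^{\bm{k}}\eta_1,\z^{\bm{l}}\eta_2\rangle$ forces the joint wandering subspace to be one-dimensional and spanned by an inner function. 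All the individual steps (purity of each $R_{z_i}$, the contradiction ruling out two orthonormal wandering vectors, and the passage from $\overline{\varphi\,\Comp[\z]}$ to $\varphi H^2(\D^n)$ using that $M_\varphi$ is an isometry) check out.
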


The analytic structure of invariant subspaces of $H^2(\D^n)$, $n
\geq 2$, is more complicated than that of the Hardy space $H^2(\D)$
(see \cite{ADC}, \cite{AC}, \cite{DS}, \cite{rudin}, \cite{RYang1},
\cite{RYang2}, \cite{RYang3}).

Now let $n \geq 2$ and $\Theta \in
H^\infty_{\clb(H^2(\D^{n-1}))}(\D)$ be an inner multiplier. Then
$\Theta H^2_{H^2(\D^{n-1})}(\D) \subseteq H^2_{ H^2(\D^{n-1})}(\D)$
and hence by identifying $H^2_{H^2(\D^{n-1})}(\D)$ with $H^2(\D^n)$,
that $\Theta H^2_{H^2(\D^{n-1})}(\D)$ is a closed
$M_{z_1}$-invariant subspace of $H^2(\D^n)$.

Thus, it is natural ask to what extent the
structure of inner multipliers determines the structure of invariant
subspaces. That is, how to determine inner multiplier $\Theta \in
H^\infty_{\clb(H^2(\D^{n-1}))}(\D)$ such that $\Theta
H^2_{H^2(\D^{n-1})}(\D)$ is an invariant subspace of $H^2(\D^n)$?

The purpose of this paper is to study the above problem for a
special class of inner multipliers (see the definition (\ref{theta})
in the next section) and to provide a general recipe for producing
invariant subspaces of $H^2(\D^n)$. More precisely, our purpose here
is to deduce more detailed structure of invariant subspaces of
$H^2(\D^n)$, $n \geq 2$, from Beurling-Lax-Halmos inner multipliers.
We refer to \cite{QY1} and \cite{JS1} for some closely related
constructions of inner multipliers.

The approach that we will take is inspired by the recent work of Y.
Yang \cite{Yang1}. However, our results
improve and generalize many results proved for the base case $n = 2$
in \cite{Yang1}.

The paper is organized as follows. In section 2 we introduce some
notations and definitions. Our main results are in Section 3. The
last section of the paper, Section 4, is devoted to the study of the
unitarily equivalent invariant subspaces of $H^2(\D^n)$.

\newsection{Notations and definitions}

We will often identify $H^2(\D^n)$ with the $n$-fold Hilbert space
tensor product $H^2(\D) \otimes \cdots \otimes H^2(\D)$ via the
unitary map $H^2(\D) \otimes \cdots \otimes H^2(\D) \ni z^{k_1}
\otimes \cdots \otimes z^{k_n} \mapsto z^{\bm{k}} \in H^2(\D^n)$,
$\bm{k} \in \N^n$. Therefore we can, and do, identify $M_{z_i}$ with
\[I_{H^2(\D)} \otimes \cdots \otimes
\underbrace{M_z}\limits_{\textup{i-th place}} \otimes \cdots \otimes
I_{H^2(\D)}. \quad \quad (i=1,\dots,n)\]

A sequence of inner functions $\{\varphi_j\}_{j=1}^{\infty}$ in
$H^\infty(\D^n)$ is said to be increasing (respectively, decreasing)
if $\frac{\varphi_{j+1}}{\varphi_{j}}$ (respectively,
$\frac{\varphi_j}{\varphi_{j+1}}$) is a non-constant inner function
for every $j\geq 1$. An inner sequence is a sequence of inner
functions $\{\varphi_j\}_{j=1}^{\infty}$ in $H^\infty(\D^n)$ which
is either increasing or decreasing.

A sequence of pairwise orthogonal projections $\{P_j\}_{j=1}^\infty$
on $H^2(\mathbb{D}^n)$ is said to be a \emph{sequence of orthogonal
complementary projections} if \[\sum \limits_{j=1}^{\infty}P_j =
I_{H^2(\D^{n})},\]in strong operator topology. The set of sequences
of orthogonal complementary projections on $H^2(\D^n)$ will be
denoted by $\clp_n$.

\textsf{From now on, we will assume that $n \geq 2$.}

Let $\{P_j\}_{j=1}^\infty \in \clp_{n-1}$ and
$\{\varphi_j\}_{j=1}^{\infty} \subseteq H^\infty(\D)$ be an inner
sequence. Then
\begin{equation}
\Theta(z) = \sum_{j=1}^{\infty} \varphi_j(z) P_j, \quad \quad (z \in
\D) \label{theta}
\end{equation}
is a $\mathcal{B}(H^2(\D^{n-1}))$-valued analytic function on $\D$.

The following lemma is an immediate consequence of the definition.

\begin{lemma}
Let $\Theta$ be as in ~\eqref{theta}. Then
$\Theta \in H^\infty_{\mathcal{B}(H^2(\D^{n-1}))}(\D)$ is an inner
multiplier.
\end{lemma}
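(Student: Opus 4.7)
The plan is to verify the two requirements separately: that $\Theta$ defines a bounded analytic $\mathcal{B}(H^2(\D^{n-1}))$-valued function on $\D$, and that its radial boundary values are isometries almost everywhere on $\T$.

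First I would fix $f\in H^2(\D^{n-1})$ and use the decomposition $f=\sum_{j} P_j f$ guaranteed by $\{P_j\}\in\clp_{n-1}$, so that $\|f\|^2=\sum_j\|P_j f\|^2$. Since the $P_j$ are pairwise orthogonal and each $\varphi_j$ is inner (hence $|\varphi_j(z)|\le 1$ on $\D$), for any $z\in\D$ the formal sum $\Theta(z)f=\sum_j\varphi_j(z)P_j f$ converges in norm with
\[
\|\Theta(z)f\|^2=\sum_{j=1}^\infty |\varphi_j(z)|^2\|P_j f\|^2\le\|f\|^2.
\]
This both defines $\Theta(z)$ as a bounded operator with $\|\Theta(z)\|\le 1$ and, once analyticity is established, places $\Theta$ in $H^\infty_{\mathcal{B}(H^2(\D^{n-1}))}(\D)$.

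Next I would check analyticity weakly, which for operator-valued functions is equivalent to analyticity in the norm topology. For $f,g\in H^2(\D^{n-1})$, using $P_j=P_j^*=P_j^2$ and Cauchy--Schwarz,
\[
\sum_{j=1}^\infty |\langle P_j f,g\rangle|=\sum_{j=1}^\infty|\langle P_j f,P_j g\rangle|\le\Big(\sum_j\|P_j f\|^2\Big)^{1/2}\Big(\sum_j\|P_j g\|^2\Big)^{1/2}=\|f\|\|g\|.
\]
Combined with $|\varphi_j(z)|\le 1$, the series $\langle\Theta(z)f,g\rangle=\sum_j \varphi_j(z)\langle P_j f,g\rangle$ converges uniformly on $\D$, so it is holomorphic as a uniform limit of holomorphic functions. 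Hence $\Theta\in H^\infty_{\mathcal{B}(H^2(\D^{n-1}))}(\D)$ and therefore induces a bounded multiplier $M_\Theta$.

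Finally I would verify the inner condition via boundary values. Each $\varphi_j$ has unimodular boundary values a.e.\ on $\T$; intersecting countably many such full-measure sets still yields a full-measure set on which $|\varphi_j(\zeta)|=1$ for every $j$. On this set,
\[
\|\Theta(\zeta)f\|^2=\sum_{j=1}^\infty|\varphi_j(\zeta)|^2\|P_j f\|^2=\sum_{j=1}^\infty\|P_j f\|^2=\|f\|^2,
\]
so $\Theta(\zeta)$ is an isometry a.e., which is precisely the inner condition recorded in the paper. No step here is particularly delicate; the only mild subtlety is the passage from a weakly analytic operator-valued function to a multiplier, which is handled by the uniform-on-$\D$ convergence argument above rather than by invoking external results.
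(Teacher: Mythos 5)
Your proof is correct. The paper itself offers no argument, simply declaring the lemma ``an immediate consequence of the definition,'' and your write-up supplies exactly the standard details behind that claim: the contractivity estimate $\|\Theta(z)f\|^2=\sum_j|\varphi_j(z)|^2\|P_jf\|^2\le\|f\|^2$, holomorphy via uniform convergence of the scalar series $\sum_j\varphi_j(z)\langle P_jf,g\rangle$, and the a.e.\ isometry of the boundary values obtained by intersecting countably many full-measure sets on which each $|\varphi_j|=1$. The only alternative worth mentioning is that the isometry of $M_\Theta$ can also be seen purely algebraically, without passing to boundary values: under the identification $H^2_{H^2(\D^{n-1})}(\D)\cong H^2(\D)\otimes H^2(\D^{n-1})$ one has $M_\Theta=\sum_j M_{\varphi_j}\otimes P_j$, whence
\[
M_\Theta^*M_\Theta=\sum_{j,k}\bigl(M_{\varphi_j}^*M_{\varphi_k}\bigr)\otimes\bigl(P_jP_k\bigr)=\sum_{j}\bigl(M_{\varphi_j}^*M_{\varphi_j}\bigr)\otimes P_j=\sum_j I\otimes P_j=I,
\]
using orthogonality of the $P_j$ and innerness of each $\varphi_j$. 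This is presumably what the authors meant by ``immediate,'' but your boundary-value route is equally valid and matches the criterion for innerness stated in the introduction.
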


The primary goal of this paper is to present a complete
characterization of inner multipliers, defined above (depending on
$\{P_j\}_{j=1}^\infty \in \clp_{n-1}$ and inner sequence
$\{\varphi_j\}_{j=1}^\infty$), for which the corresponding closed
subspaces in $H^2(\D^n)$ are shift invariant. Our approach is also
related to the study of Rudin type invariant subspaces of $H^2(\D^n)$.

An invariant subspace $\cls$ of $H^2(\D^n)$ is said to be of Rudin
type if there exists an integer $1\le k<n$, an increasing sequence of
inner functions $\{\vp_j\}_{j=1}^{\infty} \subseteq
H^\infty(\D^{k})$ and a decreasing sequence of inner functions
 $\{\psi_j\}_{j=1}^{\infty} \subseteq H^\infty (\D^{n-k})$ such that
\[
\cls = \bigvee_{j=1}^{\infty}\vp_jH^2(\D^{k})\ot
\psi_jH^2(\D^{n-k}).
\]

These invariant subspaces, also known as inner sequence based
invariant subspaces of $H^2(\D^n)$, have been studied extensively by
various authors in different contexts (see \cite{CDS}, \cite{DS},
\cite{I2}, \cite{rudin}, \cite{MS}, \cite{Se}).

\newsection{Inner multipliers and invariant subspaces}\label{sec:2}

In this section, we will prove the main result concerning inner
multipliers based shift invariant subspaces of $H^2(\D^n)$.
To begin with, we prove a result concerning invariant subspaces
corresponding to a sequence of orthogonal complementary projections
in $H^2(\D^n)$, which will be used to establish our main result.

\begin{lemma}\label{doubly commuting}
Let $\{P_j\}_{j=1}^{\infty} \in \clp_n$ and $\s_k:=
\bigoplus_{j=k}^{\infty} \Ran P_j$ be an invariant subspace of
$H^2(\D^n)$ for each $k\geq 1$. Then the following are equivalent
\begin{itemize}
\item[(i)]
$\s_k$ is doubly commuting for all $k\geq 1$.
\item[(ii)]
For all $1 \leq p \neq q \leq n$ and $j\ge 1$,
\[
P_{\s_l}M_{z_{p}}P_jM_{z_{q}}^*P_{\s_m} = 0.\quad (l,m\ge j+1)
\]
\item[(iii)]
For all $1\leq p \neq q \leq n$ and $j\ge 1$,
\[P_lM_{z_{p}}P_jM_{z_{q}}^*P_m = 0.\quad (l,m\geq j+1)
\]
\end{itemize}
\end{lemma}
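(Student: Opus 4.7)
The plan is to establish the cycle (iii) $\Rightarrow$ (ii) $\Rightarrow$ (i) $\Rightarrow$ (iii). The key preliminary reformulation is that, since each $\s_k$ is $M_{z_p}$-invariant and the shifts $M_{z_p}, M_{z_q}$ commute on $H^2(\D^n)$, a direct computation of $R_{z_p}R_{z_q}^* - R_{z_q}^*R_{z_p}$ on $\s_k$ gives
\[
R_{z_p} R_{z_q}^* - R_{z_q}^* R_{z_p} = -P_{\s_k} M_{z_p} (I - P_{\s_k}) M_{z_q}^* P_{\s_k}\big|_{\s_k}.
\]
Hence $\s_k$ is doubly commuting if and only if $P_{\s_k} M_{z_p} (I - P_{\s_k}) M_{z_q}^* P_{\s_k} = 0$ for all $p \neq q$. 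The decompositions $P_{\s_k} = \sum_{l \geq k} P_l$ and $I - P_{\s_k} = \sum_{j < k} P_j$ are what bridge this reformulation of (i) to the block-type identities in (ii) and (iii).

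Two of the three implications are then routine. For (iii) $\Rightarrow$ (ii), expanding $P_{\s_l}$ and $P_{\s_m}$ in $P_{\s_l} M_{z_p} P_j M_{z_q}^* P_{\s_m}$ produces a sum of operators $P_{l'} M_{z_p} P_j M_{z_q}^* P_{m'}$ with $l', m' \geq j+1$, each of which vanishes by (iii). For (ii) $\Rightarrow$ (i), I would take $l = m = k$ in (ii) and sum over $j = 1, \ldots, k-1$, producing $P_{\s_k} M_{z_p} (I - P_{\s_k}) M_{z_q}^* P_{\s_k} = 0$, which by the reformulation above is exactly the doubly commuting property of $\s_k$.

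The main work lies in (i) $\Rightarrow$ (iii), which I plan to handle by induction on $j$. Fixing $p \neq q$, the reformulated (i) at level $k = j+1$ reads
\[
\sum_{l', m' \geq j+1} \sum_{j' = 1}^{j} P_{l'} M_{z_p} P_{j'} M_{z_q}^* P_{m'} = 0.
\]
The crucial leverage is that for any fixed $(l, m)$ with $l, m \geq j+1$, multiplying this identity on the left by $P_l$ and on the right by $P_m$ collapses the outer double sum by pairwise orthogonality of the $P_j$'s, retaining only $l' = l$ and $m' = m$. At $j = 1$, only the term $j' = 1$ appears, so this isolation immediately gives $P_l M_{z_p} P_1 M_{z_q}^* P_m = 0$ for all $l, m \geq 2$. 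For the inductive step, the inductive hypothesis (applied with $l, m \geq j+1 > j' + 1$ for each $j' < j$) annihilates every summand with $j' < j$, leaving only the $j' = j$ contribution; the same orthogonal isolation then yields $P_l M_{z_p} P_j M_{z_q}^* P_m = 0$ for $l, m \geq j+1$. I anticipate no additional obstacle: the whole argument is orthogonality bookkeeping built on the invariance of $\s_k$ under the shifts and their mutual commutation.
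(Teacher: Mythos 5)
Your proposal is correct. All three implications check out: the identity $R_{z_p}R_{z_q}^* - R_{z_q}^*R_{z_p} = -P_{\s_k}M_{z_p}(I-P_{\s_k})M_{z_q}^*P_{\s_k}|_{\s_k}$ follows from the invariance of $\s_k$ (which gives $M_{z_p}P_{\s_k}=P_{\s_k}M_{z_p}P_{\s_k}$) together with $M_{z_q}^*M_{z_p}=M_{z_p}M_{z_q}^*$ on all of $H^2(\D^n)$, and the finite inner sum $I-P_{\s_{j+1}}=\sum_{j'=1}^{j}P_{j'}$ makes the orthogonal isolation step in the induction unproblematic.

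However, your route differs from the paper's in the hard direction. The paper proves (i) $\Rightarrow$ (ii) by invoking its Theorem 1.2 (the doubly commuting Beurling theorem of Sarkar--Sasane--Wick) to write $\s_k=\vp_kH^2(\D^n)$ for an increasing inner sequence, so that $P_j=M_{\vp_j}(I-M_{\xi_j}M_{\xi_j}^*)M_{\vp_j}^*$ with $\vp_{j+1}=\xi_j\vp_j$, and then verifies $P_{\s_{j+1}}M_{z_p}P_jM_{z_q}^*P_{\s_{j+1}}=0$ by a direct computation with the commuting isometries $M_{\vp_j}$, $M_{\xi_j}$. Your induction on $j$ replaces this entirely: you extract (iii) directly from the reformulated double-commutation identity at level $k=j+1$ by compressing with $P_l$ and $P_m$ and using the inductive hypothesis to kill the terms $j'<j$. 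This is purely orthogonality bookkeeping, uses nothing beyond the definition of doubly commuting, and in particular makes the lemma independent of the Beurling-type representation theorem; the price is an induction in place of a one-line algebraic cancellation. Your (ii) $\Rightarrow$ (i) argument is essentially identical to the paper's (insert $I=P_{\s_k}+\sum_{j<k}P_j$), and (iii) $\Rightarrow$ (ii), which the paper dismisses as easy, you carry out explicitly. Both proofs are sound; yours is the more self-contained.
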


\begin{proof}
It is easy to see that (ii) $\Leftrightarrow$ (iii). Therefore, it
is enough to prove that (i) $\Leftrightarrow$ (ii).

\noindent We first prove that (i) implies (ii). Let $\s_k$, $k\geq
1$, be a doubly commuting subspace . By Theorem~\ref{ssw}, there is
an increasing sequence of inner functions
$\{\varphi_j\}_{j=1}^\infty \subseteq H^\infty(\D^n)$ such that
$\s_k = \vp_k H^2(\D^n)$, $k \geq 1$. Then
\[\Ran P_{j}={\varphi_j} H^2(\D^n) \ominus {\varphi_{j+1}}H^2(\D^n),
\]and
\[P_j = M_{\vp_j} M_{\vp_j}^* - M_{\vp_{j+1}} M_{\vp_{j+1}}^*
= M_{\vp_j} (I - M_{\xi_j} M_{\xi_j}^*) M_{\vp_j}^*,\]where
$\varphi_{j+1} = \xi_j \varphi_{j}$ for some inner function $\xi_j
\in H^\infty(\D^n)$, $j\ge 1$. Consequently for each $j \geq 1$ and
$1 \leq p< q \leq n$, we have
\[\begin{split}P_{\cls_{\vp_{j+1}}} M_{z_p} P_j M_{z_q}^* P_{\cls_{\vp_{j+1}}} &
= M_{\vp_{j+1}} (M_{\vp_{j+1}}^* M_{\vp_{j}} M_{z_p} (I - M_{\xi_j}
M_{\xi_j}^*) M_{z_q}^* M_{\vp_{j}}^* M_{\vp_{j+1}}) M_{\vp_{j+1}}^*\\
& = M_{\vp_{j+1}} (M_{\xi_{j}}^* M_{z_p} (I - M_{\xi_j} M_{\xi_j}^*)
M_{z_q}^* M_{\xi_{j}} ) M_{\vp_{j+1}}^*\\
& = M_{\vp_{j+1}} M_{\xi_{j}}^* (M_{z_q}^* M_{z_p} - M_{\xi_j}
M_{z_p} M_{z_q}^* M_{\xi_j}^*)M_{\xi_{j}} M_{\vp_{j+1}}^*\\
& = M_{\vp_{j+1}} (M_{\xi_{j}}^* M_{z_q}^* M_{z_p} M_{\xi_{j}}  -
M_{z_p} M_{z_q}^* ) M_{\vp_{j+1}}^*\\ & = 0.
\end{split}\]
Finally, by multiplying the above on the left and right by
$P_{\s_l}$ and $P_{\s_m}$ ($l, m > j$), respectively, we get the
desired equality.

We now prove that (ii) implies (i). Let $1\leq p < q \leq n$ and
$k\geq 1$. Then
\[
\begin{split}
P_{\s_k} M_{z_{q}}^* M_{z_{p}}|_{\s_k} & = P_{\s_k} M_{z_{p}}
M_{z_{q}}^*|_{\s_k} = P_{\s_k} M_{z_{p}}\left(P_{\s_k}+P_{\s_k^{\perp}}\right) M_{z_{q}}^*|_{\s_k}\\
& = P_{\s_k} M_{z_{p}} P_{\s_k} M_{z_{q}}^*|_{\s_k} +
P_{\s_k} M_{z_{p}}\left(\sum_{j=1}^{k-1}P_j\right) M_{z_{q}}^*|_{\s_k}\\
& =P_{\s_k} M_{z_{p}} P_{\s_k} M_{z_{q}}^*|_{\s_k}  +
\sum_{j=1}^{k-1}P_{\cls_k} M_{z_{p}}  P_j M_{z_{q}}^* P_{\cls_k}\\
& = P_{\s_k} M_{z_{p}} P_{\s_k} M_{z_{q}}^*|_{\s_k}, \quad \quad
(\mbox{by~} (ii))
\end{split}
\]
that is, $ (M_{z_{q}}|_{\s_k})^*
(M_{z_{p}}|_{\s_k})=(M_{z_{p}}|_{\s_k}) (M_{z_{q}}|_{\s_k})^*$, or
equivalently, $\s_k$ is doubly commuting for all $k\ge 1$. This
completes the proof.
\end{proof}

We are now ready to state and prove our main result.

\begin{thm}\label{thm2}
Let $\{P_j\}_{j=1}^{\infty} \in \clp_{n-1}$ and
$\{\psi_j\}_{j=1}^{\infty} \subseteq H^\infty(\D)$ be a decreasing inner
sequence. Set $\Theta = \sum\limits_{j=1}^{\infty} \psi_jP_j$, $\s
=\Theta H^2_{H^2(\D^{n-1})}(\D)$ and $\s_j:=
\bigoplus_{k=j}^{\infty}\Ran P_k,
j\ge 1$. \\
\textup{(a)} $\s$ is an invariant subspace of $H^2(\D^n)$ if and
only if $\s_j$ is an invariant subspace
of $H^2(\D^{n-1})$ for all $j\ge 1$.\\
\textup{(b)} The following are equivalent
\begin{itemize}
\item[(i)] There exists an increasing inner sequence $\{\varphi_j\}_{j=1}^\infty \subseteq H^\infty(\D^{n-1})$ such that
\[
\s=\bigvee_{j=1}^{\infty}\psi_jH^2(\D)\ot \varphi_jH^2(\D^{n-1}).
\]
\item[(ii)]
$\s_j$ is a doubly commuting invariant subspace of $H^2(\D^{n-1})$
for all $j\ge 1$.
\item[(iii)]
For each $j \geq 1$, $\s_j$ is an invariant subspace of
$H^2(\D^{n-1})$ and
\[
P_{\s_l}M_{z_{p}}P_j M_{z_{q}}^*P_{\s_m} = 0.\quad (l,m > j,\ 1 \leq p <q \leq n-1)
\]
\item[(iv)]
For each $j \geq 1$, $\s_j$ is an invariant subspace of
$H^2(\D^{n-1})$ and
\[
P_l M_{z_{p}}P_j M_{z_{q}}^* P_m = 0.\quad (l,m > j,\ 1 \leq p <q \leq n-1)
\]
\end{itemize}
\end{thm}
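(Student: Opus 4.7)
The plan rests on a clean orthogonal decomposition of $\s$. Writing any $g \in H^2_{H^2(\D^{n-1})}(\D)$ fiber-wise as $g(z) = \sum_j P_j g(z)$ yields $\Theta g = \sum_j \psi_j \cdot (P_j g)$. Combining the isometric property of $M_\Theta$, the orthogonality of $\{P_j\}$, and the inner-ness of each $\psi_j$, this produces the orthogonal direct sum
\[
\s = \bigoplus_{j=1}^\infty \psi_j H^2(\D) \otimes \Ran(P_j).
\]
The arithmetical driver of the rest is the divisibility dichotomy: since $\{\psi_j\}$ is decreasing, $\psi_j H^2(\D) \subseteq \psi_k H^2(\D)$ if and only if $k \geq j$.

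For part (a), $M_{z_1}$-invariance of $\s$ is automatic, as each summand is $M_{z_1}$-invariant in the first tensor factor. For $i \geq 2$, $M_{z_i}$ acts only on the $H^2(\D^{n-1})$-factor, so applying $M_{z_i}$ to a generator $\psi_j \eta \otimes v$ (with $v \in \Ran(P_j)$, $\eta \in H^2(\D)$) produces $\sum_k \psi_j \eta \otimes P_k M_{z_i} v$. Each $k$-summand must lie in the corresponding orthogonal piece $\psi_k H^2(\D) \otimes \Ran(P_k)$ of $\s$; by the dichotomy this is automatic for $k \geq j$, while for $k < j$ the requirement $\psi_j \eta \in \psi_k H^2(\D)$ for all $\eta \in H^2(\D)$ cannot hold, forcing $P_k M_{z_i} P_j = 0$. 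Imposing this for every $k < j$ and every $i \geq 2$ is equivalent to the invariance of each $\s_j$ in $H^2(\D^{n-1})$.

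For part (b), the equivalences (ii) $\Leftrightarrow$ (iii) $\Leftrightarrow$ (iv) follow immediately from Lemma 3.1 applied in $H^2(\D^{n-1})$. For (ii) $\Rightarrow$ (i), Theorem 1.2 identifies $\s_j = \vp_j H^2(\D^{n-1})$ for an inner $\vp_j$, and the strict containments $\s_j \supsetneq \s_{j+1}$ make $\{\vp_j\}$ an increasing inner sequence. Then $\Ran(P_j) = \vp_j H^2 \ominus \vp_{j+1} H^2$, and the initial decomposition rearranges, by expanding $\vp_j H^2 = \bigoplus_{k \geq j}(\vp_k H^2 \ominus \vp_{k+1} H^2)$ and collapsing the nested chain $\sum_{j \leq k} \psi_j H^2 = \psi_k H^2$, into $\s = \bigvee_j \psi_j H^2(\D) \otimes \vp_j H^2(\D^{n-1})$.

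The converse (i) $\Rightarrow$ (ii) is the main obstacle: two orthogonal decompositions of $\s$ must be compared layer by layer. The clean device is a \emph{slicing}: for each $j$, the set $\{v \in H^2(\D^{n-1}) : \psi_j \otimes v \in \s\}$ equals $\s_j$ via the initial decomposition and the dichotomy, and also equals $\vp_j H^2(\D^{n-1})$ via the same dichotomy applied to the rearranged form of $\bigvee_j \psi_j H^2 \otimes \vp_j H^2$. This forces $\s_j = \vp_j H^2(\D^{n-1})$, which is doubly commuting by Theorem 1.2.
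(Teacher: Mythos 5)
Your proposal is correct and follows essentially the same route as the paper: both rest on the orthogonal decomposition $\s=\bigoplus_{j}\psi_jH^2(\D)\otimes\Ran P_j$ and its rearrangement along the nested chain $\psi_1H^2(\D)\subseteq\psi_2H^2(\D)\subseteq\cdots$, invoke Lemma 3.1 for (ii)$\Leftrightarrow$(iii)$\Leftrightarrow$(iv), and use Theorem 1.2 to pass between doubly commuting $\s_j$ and the inner functions $\varphi_j$. Your componentwise divisibility argument for part (a) and the ``slicing'' $\{v:\psi_j\otimes v\in\s\}=\s_j=\varphi_jH^2(\D^{n-1})$ for (i)$\Rightarrow$(ii) are just a lightly repackaged version of the paper's direct comparison of the two decompositions in \eqref{s-psi}.
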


\begin{proof}
Set $\psi_0:=0$. First note that
$\cls=\bigoplus_{j=1}^{\infty}\psi_j H^2(\D)\otimes \Ran P_j$. Since
the inner sequence $\{\psi_j\}_{j=1}^{\infty}$
is decreasing, we have $\psi_jH^2(\D)\subset \psi_{j+1}H^2(\D)$ for all
$j\ge 1$, and
\begin{align*} \bigoplus_{j=1}^{\infty}\Big((\psi_j H^2(\D) \ominus \psi_{j-1}
H^2(\D))\otimes\s_j\Big)& = \bigoplus_{j=1}^{\infty}\Big((\psi_j
H^2(\D) \ominus \psi_{j-1} H^2(\D))\otimes
\big(\bigoplus_{k=j}^{\infty}\Ran P_k\big)\Big)\\
&= \bigoplus_{j=1}^{\infty}\Big(\bigoplus_{k=1}^{j}(\psi_k H^2(\D) \ominus \psi_{k-1}
H^2(\D))\otimes \Ran P_j\Big)\\
&=\bigoplus_{j=1}^{\infty}\Big(\psi_jH^2(\D)\otimes \Ran P_j\Big),
\end{align*}
where for the last equality we use
$ \bigoplus_{k=1}^{j}(\psi_kH^2(\D) \ominus \psi_{k-1} H^2(\D))=\psi_jH^2(\D)\quad
 (j\ge 1).$
Thus
\begin{equation}
\label{s-psi}
\cls = \bigoplus_{j=1}^{\infty}\Big((\psi_j H^2(\D) \ominus \psi_{j-1}
H^2(\D))\otimes\s_j\Big).
\end{equation}

%Assume first $\cls = \Theta H^2_{H^2(\D^{n-1})}(\D)$ and set
%$M_{\vp_0} = 0$ and $L_j = M_{\vp_j} M_{\vp_j}^*$, $j \geq 0$.
%Then
%\[\begin{split}
%\mathop{\oplus}_{j=1}^{\infty} \big((L_j - L_{j-1}) \otimes
%\sum_{t=j}^\infty P_t \big) & = \mathop{\oplus}_{j=1}^{\infty} \big(
%L_j - \otimes \sum_{t=j}^\infty P_t\big) -
%\mathop{\oplus}_{j=1}^{\infty} \big( L_j - \otimes
%\sum_{t=j+1}^\infty P_t\big)\\& = \mathop{\oplus}_{j=1}^{\infty}
%\big( L_j \otimes  P_j\big).
%\end{split}\]
%Consequently \[\cls = \mathop{\oplus}\limits_{j=1}^{\infty}
%\psi_j(z_1) H^2(\D)\otimes \Ran P_j = \mbox{ran} \Big(
%\mathop{\oplus}_{j=1}^{\infty} \big( L_j \otimes P_j\big)\Big) =
%\mbox{ran} \Big(\mathop{\oplus}_{j=1}^{\infty} \big((L_j - L_{j-1})
%\otimes \sum_{t=j}^\infty P_t \big)\Big),\]and hence

\noindent Proof of part (a): Let $\s$ be an invariant subspace of
$H^2(\D^n)$ and $j \geq 1$ be a fixed integer. Let $f\in\psi_j
H^2(\D) \ominus \psi_{j-1} H^2(\D)$, $g\in \s_j$ and $2\le i\le n$.
Since
\[z_i(f \otimes g) = f \otimes z_i g \in \cls =\bigoplus_{k=1}^{\infty}\left(\psi_k H^2(\D) \ominus
\psi_{k-1} H^2(\D)\right)\otimes \s_k,\]and $f \otimes z_i g \perp
(\psi_k H^2(\D) \ominus \psi_{k-1} H^2(\D))\otimes \s_k$ for all $k
\neq j$, we have $f \otimes z_i g \in (\psi_j H^2(\D) \ominus
\psi_{j-1} H^2(\D)) \otimes \cls_j$ and hence $z_i g \in \cls_j$.

\noindent Conversely, let $\cls_j$ be an invariant subspace of
$H^2(\D^{n-1})$ for all $j \geq 1$. Then by (\ref{s-psi}) it follows
that $\cls$ is joint $\{M_{z_2}, \cdots, M_{z_n}\}$-invariant.
Finally, since $\cls = \Theta H^2_{H^2(\D^{n-1})}(\D)$, it follows
that $\cls$ is $M_{z_1}$-invariant.

\noindent Proof of part (b): (ii)$\Leftrightarrow$ (iii)
$\Leftrightarrow$ (iv) follows from Lemma \ref{doubly commuting}.
Now assume that (i) is true. Then
\[
\s=\bigvee_{j=1}^{\infty}\psi_jH^2(\D)\ot \varphi_jH^2(\D^{n-1})=
\bigoplus_{j=1}^\infty\left(\psi_j H^2(\D) \ominus \psi_{j-1}
H^2(\D)\right)\ot \varphi_jH^2(\D^{n-1}).
\]
Comparing this with ~\eqref{s-psi}, we have
$\s_j=\varphi_jH^2(\D^{n-1})$ for all $j\ge 1$. Then by
Theorem~\ref{ssw}, $\s_j$ is doubly commuting for all $j\ge 1$.

\noindent Conversely assume (ii). Then by Theorem~\ref{ssw}, there
exists a sequence of increasing inner functions
$\{\varphi_j\}_{j=1}^\infty \subseteq H^2(\D^{n-1})$ such that
$\s_j=\varphi_jH^2(\D^n)$, $j\ge 1$. Then (i) follows from
\eqref{s-psi}. This completes the proof.
\end{proof}

One can reformulate the above theorem by replacing the decreasing
inner sequence by an increasing one.

\begin{thm}
Let $\{P_j\}_{j=1}^{\infty} \in \clp_{n-1}$ and
$\{\varphi_j\}_{j=1}^{\infty} \subseteq H^\infty(\D)$ be an increasing
inner sequence.

Set $\Theta = \sum\limits_{j=1}^{\infty} \phi_jP_j$, $\s =\Theta
H^2_{H^2(\D^{n-1})}(\D)$ and $\s_j:=\bigoplus_{k=1}^{j}\Ran P_k,
j\ge 1$. \\
\textup{(a)} $\s$ is an invariant subspace of $H^2(\D^n)$ if and
only if $\s_j$ is an invariant subspace
of $H^2(\D^{n-1})$ for all $j\ge 1$.\\
\textup{(b)} There exists a decreasing inner sequence
$\{\psi_j\}_{j=1}^\infty \subseteq H^\infty(\D^{n-1})$ such that
\[
\s=\bigvee_{j=1}^{\infty}\phi_jH^2(\D)\ot \psi_jH^2(\D^{n-1}),
\]
if and only if $\s_j$ is a doubly commuting invariant subspace of
$H^2(\D^{n-1})$ for all $j\ge 1$.
\end{thm}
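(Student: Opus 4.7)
\textit{Proof sketch.} The plan is to follow the argument given for Theorem~\ref{thm2}, first deriving a tensor decomposition of $\s$ that plays the role of equation \eqref{s-psi} and then reading off parts (a) and (b) from it. Since $\{\phi_j\}$ is increasing we have $\phi_{j+1} H^2(\D) \subseteq \phi_j H^2(\D)$, so each $\phi_j H^2(\D) \ominus \phi_{j+1} H^2(\D)$ is defined and these subspaces are pairwise orthogonal. Setting $\phi_\infty H^2(\D) := \bigcap_{j\ge 1} \phi_j H^2(\D)$ and performing a double-summation swap analogous to the one carried out just before \eqref{s-psi}, I expect the identity
\[
\s \,=\, \bigoplus_{k=1}^{\infty} \bigl( \phi_k H^2(\D) \ominus \phi_{k+1} H^2(\D) \bigr) \ot \s_k \;\oplus\; \phi_\infty H^2(\D) \ot H^2(\D^{n-1}),
\]
where now each $\s_k = \bigoplus_{j \le k} \Ran P_j$ is a finite direct sum.

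With this decomposition in hand, part (a) should follow as in Theorem~\ref{thm2}. For the forward direction, fix $j$ and pick a nonzero $f \in \phi_j H^2(\D) \ominus \phi_{j+1} H^2(\D)$ together with $g \in \s_j$; orthogonality of the first tensor factors across all summands (including the residual one) forces $f \ot z_i g$ to lie in $(\phi_j H^2(\D) \ominus \phi_{j+1} H^2(\D)) \ot \s_j$, whence $z_i g \in \s_j$ for $2 \le i \le n$. Conversely, invariance of each $\s_j$ under $M_{z_2},\ldots,M_{z_n}$ makes every main summand invariant, the residual summand $\phi_\infty H^2(\D) \ot H^2(\D^{n-1})$ is trivially invariant, and $M_{z_1}$-invariance is automatic from $\s = \Theta H^2_{H^2(\D^{n-1})}(\D)$.

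For part (b), starting from the doubly commuting hypothesis I would apply Theorem~\ref{ssw} to write $\s_j = \psi_j H^2(\D^{n-1})$ with $\psi_j$ inner; the nesting $\s_j \subseteq \s_{j+1}$ then forces $\psi_{j+1}$ to divide $\psi_j$, so $\{\psi_j\}$ is a decreasing inner sequence. Substituting into the decomposition above and using $\bigvee_j \s_j = H^2(\D^{n-1})$ (equivalently $\bigvee_j \psi_j H^2(\D^{n-1}) = H^2(\D^{n-1})$) to absorb the residual piece, I would verify $\s = \bigvee_j \phi_j H^2(\D) \ot \psi_j H^2(\D^{n-1})$. Conversely, expanding this closed span using the same orthogonal decomposition of each $\phi_j H^2(\D)$ and matching components term-by-term along the pairwise orthogonal first factors (using that $\phi_k H^2(\D) \ominus \phi_{k+1} H^2(\D) \ne \{0\}$) recovers $\s_k = \psi_k H^2(\D^{n-1})$, which is doubly commuting by Theorem~\ref{ssw}.

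The main obstacle I anticipate is the residual term $\phi_\infty H^2(\D) \ot H^2(\D^{n-1})$, which has no analogue in the decreasing inner sequence setting of Theorem~\ref{thm2} (there the convention $\psi_0 = 0$ made the decomposition terminate cleanly, whereas here the intersection $\bigcap_j \phi_j H^2(\D)$ need not be trivial). Checking that this piece is harmless for part (a)---it is automatically invariant---and that in part (b) it is exactly compensated by the $\bigvee_j \psi_j H^2(\D^{n-1}) = H^2(\D^{n-1})$ contribution, is the key delicate step. Once this is handled, the rest of the argument is a direct translation of the proof of Theorem~\ref{thm2}.
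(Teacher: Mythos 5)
Your proposal is correct and follows the same overall strategy as the paper's own proof: decompose $\s$ orthogonally along the gaps $\phi_k H^2(\D)\ominus\phi_{k+1}H^2(\D)$ and then repeat the argument of Theorem~\ref{thm2}. The one substantive difference is your residual term $\phi_\infty H^2(\D)\ot H^2(\D^{n-1})$, with $\phi_\infty H^2(\D)=\bigcap_{j\ge 1}\phi_j H^2(\D)$, and here your version is the more careful one. The paper simply asserts $\s=\bigoplus_{j}(\phi_j H^2(\D)\ominus\phi_{j+1}H^2(\D))\ot\s_j$ with no residual summand; that identity fails whenever $\bigcap_j\phi_j H^2(\D)\neq\{0\}$ (for instance when the ratios $\phi_{j+1}/\phi_j$ are Blaschke factors whose zeros satisfy the Blaschke condition), since the left-hand side $\bigoplus_j\phi_jH^2(\D)\ot\Ran P_j$ then contains $\phi_\infty H^2(\D)\ot\Ran P_1$ while the right-hand side is orthogonal to it. Your handling of the extra piece is right on both counts: in part (a) it is invariant under $M_{z_2},\dots,M_{z_n}$ for free and does not interfere with the component-matching argument (since $\phi_j H^2(\D)\ominus\phi_{j+1}H^2(\D)$ is orthogonal to $\phi_\infty H^2(\D)$), and in part (b) it is matched exactly by the term $\phi_\infty H^2(\D)\ot\bigvee_j\psi_j H^2(\D^{n-1})=\phi_\infty H^2(\D)\ot\bigvee_j\s_j=\phi_\infty H^2(\D)\ot H^2(\D^{n-1})$, which follows from $\sum_j P_j=I$. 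So your sketch not only reproduces the paper's argument but repairs the decomposition on which it rests.
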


\begin{proof}
(a) We first note that, under the given assumptions, the subspace
$\s$ is given by
\[
\s=\bigoplus_{j=1}^\infty\varphi_j(z_1)H^2(\D)\ot \Ran P_j=
\bigoplus_{j=1}^{\infty}\big(\varphi_j(z_1)H^2(\D)\ominus
\varphi_{j+1}(z_1)H^2(\D)\big) \ot \s_j.
\]By the same argument as in part (a) of Theorem \ref{thm2}, it follows that $\s$
is an invariant subspace of $H^2(\D^n)$ if and only if $\s_j$ is an
invariant subspace of $H^2(\D^{n-1})$ for all $j\ge 1$.

\noindent (b) The proof is identical to the proof of part (b) in
Theorem~\ref{thm2} except the fact that one obtains a decreasing
inner sequence corresponding to the increasing doubly commuting
invariant subspaces $\{\cls_j\}_{j=1}^\infty$ of $H^2(\D^{n-1})$.
\end{proof}

\begin{rem}
A modification of our argument yields a similar characterization of
invariant subspaces of $H^2(\D^n)$ corresponding to the inner
multiplier
$\Theta(z_1,\dots,z_k)=\sum_{j=1}^\infty\varphi_j(z_1,\dots,z_k)P_j$,
where $\{\varphi_j\}_{j=1}^\infty$ is a decreasing or increasing
inner sequence in $H^\infty(\D^k)$ and $\{P_j\}_{j=1}^\infty \in
\clp_{n-k}$.
\end{rem}

\newsection{Unitarily equivalent invariant subspaces}

Let $\s_1$ and $\s_2$ be two invariant subspaces of $H^2(\D^n)$.
Then $\s_1$ and $\s_2$ are said to be unitarily equivalent if there
exists a unitary operator $U:\s_1 \longrightarrow \s_2$ such that
\[
UM_{z_i}|_{\s_1} = M_{z_i}|_{\s_2}U. \quad \text{for} \quad
(i=1,\ldots,n)
\]
The unitary equivalence of inner sequence based invariant subspaces
and two inner sequences based invariant subspaces of $H^2(\D^2)$ are
completely described in ~\cite{MS} and ~\cite{Yang1}, respectively.
Here we present a similar result for Rudin type invariant subspaces
in $n$-variables. The proof follows along the same lines as in
Theorem 3.1 in \cite{Yang1}.

\begin{thm}
Let $\{\vp_j\}_{j=1}^{\infty}, \{\widetilde{\vp}_j\}_{j=1}^{\infty}
\subseteq H^\infty(\D)$ be two decreasing inner sequences and
$\{\psi_j\}_{j=1}^{\infty},$ $\{\widetilde{\psi}_j\}_{j=1}^{\infty}
\subseteq H^\infty(\D^{n-1})$ be two increasing inner sequences with
$\psi_1=1=\widetilde{\psi}_1$. Let
\[
\s= \bigvee_{j=1}^{\infty} \vp_j H^2(\D) \otimes
\psi_jH^2(\D^{n-1}), \quad \mbox{and} \quad \widetilde{\s}=
\bigvee_{j=1}^{\infty} \widetilde{\vp}_j H^2(\D) \otimes
\widetilde{\psi}_jH^2(\D^{n-1}).
\]
Then $\s$ and $\widetilde{\s}$ are unitarily equivalent if and only
if there exists an inner function $\eta\in H^\infty(\D^{n})$,
depending only on the first variable $z_1$, such that $\s=
\eta\widetilde{\s}$.
\end{thm}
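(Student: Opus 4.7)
The plan is as follows. For sufficiency, if $\cls = \eta \widetilde{\cls}$ for an inner $\eta \in H^\infty(\D)$ depending only on $z_1$, then $M_\eta$ restricts to a map $\widetilde{\cls} \to \cls$ that is isometric (because $\eta$ is inner, so $|\eta|=1$ a.e.\ on $\mathbb{T}^n$) and surjective by hypothesis, hence a unitary. Since $\eta$ is a scalar function, $M_\eta$ commutes with each $M_{z_i}$, giving the required module unitary equivalence.

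For necessity, let $U: \cls \to \widetilde{\cls}$ be a unitary with $UM_{z_i} = M_{z_i}U$ for all $i$. I would first apply Beurling--Lax--Halmos in the first variable: identifying $H^2(\D^n)$ with $H^2_{H^2(\D^{n-1})}(\D)$ and following the construction in the proof of Theorem~\ref{thm2}, write $\cls = \Theta\, H^2_{H^2(\D^{n-1})}(\D)$ and $\widetilde{\cls} = \widetilde{\Theta}\, H^2_{H^2(\D^{n-1})}(\D)$ with
\[
\Theta(z_1) = \sum_{j \geq 1} \varphi_j(z_1) P_j, \qquad \widetilde{\Theta}(z_1) = \sum_{j \geq 1} \widetilde{\varphi}_j(z_1) \widetilde{P}_j,
\]
where $P_j$ (resp.\ $\widetilde{P}_j$) is the orthogonal projection of $H^2(\D^{n-1})$ onto $\psi_j H^2(\D^{n-1}) \ominus \psi_{j+1} H^2(\D^{n-1})$ (resp.\ onto $\widetilde{\psi}_j H^2(\D^{n-1}) \ominus \widetilde{\psi}_{j+1} H^2(\D^{n-1})$). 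The operators $M_\Theta, M_{\widetilde{\Theta}}$ are unitaries from $H^2_{H^2(\D^{n-1})}(\D)$ onto $\cls, \widetilde{\cls}$ that intertwine $M_{z_1}$, so $V := M_{\widetilde{\Theta}}^{-1}\, U\, M_\Theta$ is a unitary on $H^2_{H^2(\D^{n-1})}(\D)$ commuting with $M_{z_1}$; by the classical description of the commutant of the shift, $V = M_\Phi$ for a two-sided inner multiplier $\Phi \in H^\infty_{\clb(H^2(\D^{n-1}))}(\D)$. The remaining relations $UM_{z_j} = M_{z_j}U$ for $j = 2,\ldots,n$ then yield, for a.e.\ $z_1 \in \mathbb{T}$, the operator identities
\[
\Phi(z_1)\,\Theta(z_1)^{*}\,M_{z_j}\,\Theta(z_1) \;=\; \widetilde{\Theta}(z_1)^{*}\,M_{z_j}\,\widetilde{\Theta}(z_1)\,\Phi(z_1),
\]
where $M_{z_j}$ on both sides denotes the shift on $H^2(\D^{n-1})$. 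These are non-trivial precisely because the level projections $P_j, \widetilde{P}_j$ do not commute with these shifts.

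The principal technical obstacle is to extract from these operator-valued relations a scalar inner function $\eta \in H^\infty(\D)$ with $\Phi(z_1) = \eta(z_1)\, W$ for some constant unitary $W$ on $H^2(\D^{n-1})$. The crucial structural input is the hypothesis $\psi_1 = \widetilde{\psi}_1 = 1$, which places the constant $1 \in H^2(\D^{n-1})$ simultaneously in $\Ran P_1$ and $\Ran \widetilde{P}_1$, providing a common distinguished base vector. Tracking the image $\beta := U(\varphi_1) \in \widetilde{\cls}$ of the canonical vector $\varphi_1 \in \cls$ (well-defined by $\psi_1 = 1$), one first checks that $\|p\beta\| = \|p\varphi_1\| = \|p\|$ for every polynomial $p$, forcing $\beta$ to be inner in $H^2(\D^n)$ with $\beta H^2(\D^n) = U(\varphi_1 H^2(\D^n)) \subseteq \widetilde{\cls}$. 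Then, performing an inductive level-by-level analysis along the increasing sequences $\{\psi_j\}, \{\widetilde{\psi}_j\}$ — mirroring Yang's argument in \cite{Yang1} for the $n=2$ case — one propagates the scalar structure from the base level to all levels and concludes the desired form of $\Phi$. Once this is established, the constant unitary $W$ is absorbed into the freedom of the Beurling--Lax--Halmos coefficient space, producing $\Theta(z_1) = \eta(z_1)\widetilde{\Theta}(z_1)$ and hence $\cls = \Theta\, H^2_{H^2(\D^{n-1})}(\D) = \eta \widetilde{\Theta}\, H^2_{H^2(\D^{n-1})}(\D) = \eta \widetilde{\cls}$, as desired.
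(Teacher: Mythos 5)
Your sufficiency argument is fine. The necessity direction, however, is not actually a proof: the step you yourself label the ``principal technical obstacle'' --- extracting from the operator-valued identities the factorization $\Phi(z_1)=\eta(z_1)W$ with $\eta$ a scalar inner function and $W$ a constant unitary --- is only asserted, with the work deferred to an unexecuted ``inductive level-by-level analysis \ldots mirroring Yang's argument.'' That induction is the entire content of the theorem in your setup, and nothing in the proposal indicates how the scalar structure propagates from $\Ran P_1$ to the higher levels, nor where the specific structure of the sequences $\{\psi_j\}$ and $\{\widetilde{\psi}_j\}$ would enter. As written, this is a genuine gap at the heart of the argument.

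The paper avoids all of this by quoting Lemma 1 of \cite{ADC}: any unitary equivalence between invariant subspaces of $H^2(\D^n)$ is automatically implemented by multiplication by a unimodular function $\eta\in L^\infty(\mathbb{T}^n)$, so that $\s=\eta\widetilde{\s}$ from the outset. Granting that, the only remaining point is exactly where $\psi_1=\widetilde{\psi}_1=1$ is used: $\widetilde{\vp}_1(z_1)\in\widetilde{\s}$ and $\vp_1(z_1)\in\s$, hence $\eta\widetilde{\vp}_1(z_1)$ and $\overline{\eta}\,\vp_1(z_1)$ both lie in $H^2(\D^n)$, forcing $\eta$ to be simultaneously holomorphic and antiholomorphic in $z_2,\dots,z_n$ and therefore a function of $z_1$ alone. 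Your observation that $\beta=U(\vp_1)$ satisfies $\|p\beta\|=\|p\|$ for every polynomial $p$ is in fact the first step of the proof of that lemma: it gives $|\beta|=1$ a.e., and a short polarization argument (comparing $\langle z^{\bm{k}}f,g\rangle$ with $\langle z^{\bm{k}}Uf,Ug\rangle$ for $g$ in $\vp_1 H^2(\D^n)$) then shows $U$ is multiplication by $\overline{\vp_1}\beta$ on all of $\s$. If you pushed that one more step instead of detouring through the commutant-of-the-shift and level-by-level machinery, you would land on the paper's two-line proof; either cite the Agrawal--Clark--Douglas lemma or complete that multiplication-operator argument directly --- the induction on $\Phi$ is not needed.
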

\begin{proof}
It is enough to prove the necessary part. Let $\s$ and
$\widetilde{\s}$ be unitarily equivalent. Then $\s$ $=\eta
\widetilde{\s}$ for some unimodular function $\eta \in
L^\infty(\mathbb{T}^n)$ (see Lemma 1 in \cite{ADC}). Then both
$\eta\widetilde{\varphi}_1(z_1)$ and $\overline{\eta}\varphi_1(z_1)$ are
in $H^2(\D^n)$, and therefore $\eta$ is holomorphic and
anti-holomorphic in $z_2,\ldots,z_{n}$. Thus $\eta$ depends only on
$z_1$ variable. This completes the proof.
\end{proof}

For more results related to unitarily equivalent invariant subspaces
of $H^2(\D^n)$, $n \geq 2$, we refer the readers to \cite{ADC},
\cite{JS1} and \cite{RYang4}.

\vskip10pt

\noindent\textbf{Acknowledgement:} The first two authors acknowledge
with thanks financial support from the Department of Atomic Energy,
India through N.B.H.M (Grant no. 2/40(41)/2012/R\&D-II/1286 \&
Grant no. No. 2/40(25)/2014/R\&D-II/16013) Post Doctoral Fellowship, and also
grateful to Indian Statistical Institute, Bangalore Center for
warm hospitality.

%\begin{thebibliography}{ABC}

\end{document}